\numberwithin{equation}{section}
\theoremstyle{plain}
\newtheorem{Thm}[equation]{Theorem}
\newtheorem{Prop}[equation]{Proposition}
\newtheorem{Cor}[equation]{Corollary}
\newtheorem{Lem}[equation]{Lemma}
\theoremstyle{definition}
\newtheorem{Def}[equation]{Definition}
\newtheorem{Exa}[equation]{Example}
\newtheorem{Rmk}[equation]{Remark}
\newenvironment{red}{\relax\color{red}}{\relax}
\newenvironment{blue}{\relax\color{blue}}{\hspace*{.5ex}\relax}
\newcommand{\ZZ}{\mathbb{Z}}
\newcommand{\BB}{\mathcal{B}}
\newcommand{\PP}{\mathcal{P}}
\newcommand{\LL}{\mathcal{L}}
\newcommand{\ber}{\begin{red}}
\newcommand{\er}{\end{red}}
\newcommand{\beb}{\begin{blue}}
\newcommand{\eb}{\end{blue}}
\newcommand{\bw}{{\boldsymbol{w}}}
\newcommand{\bv}{{\boldsymbol{v}}}
\begin{document}
	
\title[Answering Two OPAC Problems Involving Banff Quivers]{Answering Two OPAC Problems Involving Banff Quivers}

\author[T. J. Ervin]{Tucker J. Ervin}
\address{Department of Mathematics, University of Alabama,
	Tuscaloosa, AL 35487, U.S.A.}
\email{tjervin@crimson.ua.edu}
\author[B. Jackson]{Blake Jackson}
\address{Department of Mathematics, University of Connecticut,
	Storrs, CT 06269, U.S.A.}
\email{blake.jackson@uconn.edu}
	
\begin{abstract}
    In a post on the Open Problems in Algebraic Combinatorics (OPAC) blog, E. Bucher and J. Machacek posed three open problems: OPAC-033, OPAC-034, and OPAC-035.
    These three problems deal with the relationships between three infinite classes of quivers: the Banff, Louise, and $\PP$ quivers.
    OPAC-034 asks whether or not every Banff quiver can be verified to be Banff by only considering sources and sinks, and OPAC-035 asks whether or not every Banff quiver is contained in the class $\PP$.
    We give an answer to both questions, showing that every Banff quiver can be verified to be Banff by using sources and sinks, and therefore that every Banff quiver lives in the class $\PP$.
    We also make some progress on OPAC-033, showing a result similar to our result OPAC-034 for Louise quivers.
\end{abstract}

\maketitle

\section{Introduction} \label{sec-introduction}
Open Problems in Algebraic Combinatorics (OPAC) is a conference that hosts academics from around the globe to discuss open problems at the forefront of research in algebraic combinatorics.
In 2019, the conference created a running document containing some current open problems as well as short introductions to each. 
Last updated in 2022, the document and the associated blog host 37 open problems in various areas of algebraic combinatorics. 
In January 2021, Bucher and Machacek submitted three new problems to the blog: OPAC-033, OPAC-034, and OPAC-035 \cite{bucher_banff_2021}. 
These three problems deal with combinatorial objects called quivers; specifically, these problems are about the relationships between the classes of Banff ($\BB$), Louise ($\LL)$, and $\PP$ quivers (as well as the classes $\BB'$ and $\PP'$ which are closely related to the Banff and $\PP$ classes).

The first of these classes of quivers to be introduced was the class $\mathcal{P}$ in a 2008 article by Kontsevich and Soibelman \cite{kontsevich_stability_2008}. For the authors of this paper, the class $\PP$ has the property that the corresponding 3-dimensional Calabi-Yau category is rigid. 
Banff quivers were introduced by Muller \cite{muller_locally_2013} in 2012 in order to study locally acyclic cluster algebras. 
The class of Banff quivers is a large class of quivers associated with locally acyclic cluster algebras.
Building on the work of Konsevich and Soibelman, Ladkani \cite{ladkani_cluster_2013} proposed a related class of quivers called $\PP'$ in 2013.
In 2016, Lam and Speyer \cite{lam_cohomology_2022} introduced a new class of quivers --- the Louise quivers --- while investigating the cohomology of cluster varieties for locally acyclic cluster algebras.
In 2018, Bucher and Machacek \cite{bucher_reddening_2020} showed that Banff quivers admit reddening sequences and introduced the $\BB'$ class of quivers.
They also raised some questions about the relationships between the classes of Banff, Louise, $\PP$, $\BB'$, and $\PP'$ quivers.
Eventually, the conjectures in these papers would be condensed into the OPAC blog post in January 2021. 

\textbf{OPAC-033:} \textit{Find an example of a Banff quiver that is not Louise or prove no such quiver exists.} 

\textbf{OPAC-034:} \textit{Find an example of a quiver in $\BB \setminus \BB'$  or prove no such quiver exists.}

\textbf{OPAC-035:} \textit{Find an example of a Banff quiver that is not in the class $\PP$ or prove no such quiver exists.}

This paper answers OPAC-034 and OPAC-035.
Specifically, we show that $\BB = \BB'$ and that, consequently, $\BB \subset \PP$.
Due to the work of Muller \cite[Lemma 8.13]{muller_skein_2016}, we know that the class $\BB'$ has the property that the upper cluster algebra equals the cluster algebra for quantum cluster algebras. 
Therefore, Muller's results extend to the entire class of Banff quivers.
We also define a new class of quiver $\LL'$ which is the counterpart of $\BB'$ for the class of Louise quivers and show that, by a similar argument to our main theorem, $\LL = \LL'$. 

The definitions of all of the classes of quivers can be found in Section~\ref{sec-background}.
The proofs of OPAC-034 and OPAC-035 and the partial result dealing with OPAC-033 are in Section~\ref{sec-proof}. 
Future work will attempt to make progress on OPAC-033.

\subsection{Acknowledgements}

We would like to thank Eric Bucher and John Machacek for their discussion and comments on an earlier draft.
We would also like to thank our advisor, Kyungyong Lee, for his guidance and support.

\section{Background}\label{sec-background}

All six of the classes listed in the introduction are infinite families of combinatorial objects known as \textit{quivers}.
Quivers were first introduced by Gabriel \cite{gabriel_unzerlegbare_1972}, but the definition of quivers has gone through many iterations since then.
Our definition of quiver is as follows.

\begin{Def}\label{def-quiver}
    A \textbf{quiver} $Q = (Q_0, Q_1)$ is a finite multidigraph without loops and directed 2-cycles where $Q_0$ is the set of vertices and $Q_1$ is the set of arrows.
    The elements of $Q_0$ are indexed by the numbers $1, 2, ... n$. 
    A quiver $Q$ is \textbf{acyclic} if it contains no directed cycles of any length.
    A vertex $i \in Q_0$ is a \textbf{source} (respectively \textbf{sink)} if there are no arrows in $Q_1$ which end (respectively begin) at $i$.
    If $C \subseteq Q_0$ is a subset of vertices of $Q$, then $Q_C = (C, Q_1')$ is the \textbf{(full) subquiver of} $Q$ \textbf{induced by} $C$ where $Q_1' \subseteq Q_1$ is the collection of arrows in $Q$ which start and end at vertices in $C$.
    When there is no risk of confusion, we abuse notation and call $C$ the subquiver induced by $C$.
\end{Def}

The most fundamental operation on these objects is called \textit{mutation}, which goes back at least as far as 1973 in the study of reflection functors for quiver representations \cite{bernstein_coxeter_1973}.

\begin{Def}\label{def-mutation}
    Let $i \in Q_0$ be a vertex in $Q$. 
    Then $\mu_i (Q)$ is the \textbf{mutation} of $Q$ at vertex $i$ and is the quiver obtained from $Q$ in the following way:
    \begin{enumerate}
        \item for each path $j \to i \to k$ in $Q$ of length 2 passing through $i$, add an arrow $j \to k$ in $\mu_i (Q)$
        \item reverse all arrows which begin or end at vertex $i$
        \item delete any 2-cycles that have appeared as a result of step 1.
    \end{enumerate}
    A quiver $Q'$ is \textbf{mutation-equivalent} to $Q$ if there is a finite sequence of vertices $\bw = [i_1, i_2, ..., i_k]$ in $Q_0$ such that $Q' = \mu_{i_k}(\mu_{i_{k-1}}(...(\mu_{i_1}(Q))...)) = \mu_\bw(Q)$.
    The sequence $\bw$ is called a \textbf{mutation sequence}.
    Any vertex $i$ that can be mutated at is called \textbf{mutable}, and in this paper all vertices are mutable.
\end{Def}

Mutation is an involutive operation that allows us to produce new quivers from existing quivers.
By starting with a quiver $Q$ (or a set of quivers), we can mutate (sometimes indefinitely) to produce a set of quivers that are mutation-equivalent to the starting set.
Mutating a general quiver $Q$ at an arbitrary vertex $i$ will often cause new arrows to be added (i.e. if there are any directed 2-paths through $i$, new arrows might be introduced after mutation at $i$).
However, if the chosen vertex $i$ is a source or a sink, then there can be no directed 2-paths through $i$.
In this case, we can be assured that mutation at $i$ will never introduce new arrows.
This motivates the following definitions and results.

\begin{Def} \label{def-source-sink-mutation}
    Let $Q$ be a quiver and $\bw = [w_1, w_2, \dots, w_k]$ a mutation sequence.
    We say that $\bw$ is a \textbf{source (sink) mutation-sequence} if $w_i$ is a source (sink) in $\mu_{\bw_i}(Q)$, where $\bw_i = [w_1, w_2, \dots, w_{i-1}]$.
\end{Def}

\begin{Def}{\cite{bang-jensen_classes_2018}} \label{def-acyclic-ordering}
    Let $Q$ be a quiver on $n$ vertices.
    An ordering $a_1 \prec a_2 \prec \dots \prec a_n$ on the vertices is an \textbf{acyclic ordering} if whenever there exists an arrow $a_i \to a_j$ in $Q_1$, then $a_i \prec a_j$.
\end{Def}

\begin{Prop}{\cite[Proposition 3.1.1]{bang-jensen_classes_2018}} \label{prop-acyclic-source-sink}
    Let $Q$ be an acyclic quiver.
    Then $Q$ has at least one source and at least one sink.
\end{Prop}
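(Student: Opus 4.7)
The plan is to prove the existence of a source by contradiction; the existence of a sink follows by a completely symmetric argument (reversing the roles of arrow heads and tails).

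Suppose toward a contradiction that $Q$ has no source. Then by the definition of source, every vertex $v \in Q_0$ has at least one incoming arrow. Pick any starting vertex $v_0 \in Q_0$, and inductively choose vertices $v_1, v_2, v_3, \ldots$ as follows: given $v_k$, use the assumption that $v_k$ is not a source to pick some $v_{k+1} \in Q_0$ with an arrow $v_{k+1} \to v_k$ in $Q_1$. Since $Q_0$ is finite (by Definition~\ref{def-quiver}), the pigeonhole principle forces the sequence $(v_k)_{k \geq 0}$ to revisit some vertex: there exist indices $i < j$ with $v_i = v_j$. The arrows $v_j \to v_{j-1} \to \cdots \to v_{i+1} \to v_i = v_j$ then assemble into a directed cycle in $Q$, contradicting the hypothesis that $Q$ is acyclic.

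The sink case is obtained by running the same argument in the opposite direction: if every vertex has an outgoing arrow, build a forward sequence $v_0 \to v_1 \to v_2 \to \cdots$ and extract a directed cycle from a repeated vertex.

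There is essentially no obstacle here; the only point that needs care is ensuring the backward/forward walk must terminate in a repetition, which is immediate from $|Q_0| < \infty$. An alternative route would be induction on $|Q_0|$, removing a sink (once one is found) and applying the inductive hypothesis to extract a source from the remaining acyclic quiver, but this is circular unless paired with the contradiction argument above, so the pigeonhole proof is the cleanest presentation.
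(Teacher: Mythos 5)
Your argument is correct: the backward walk plus finiteness of $Q_0$ yields a repeated vertex and hence a directed cycle, contradicting acyclicity, and the sink case is symmetric. Note that the paper does not prove this proposition at all --- it is quoted from the literature (Bang-Jensen--Gutin, Proposition 3.1.1) --- so there is nothing to compare against; your pigeonhole proof is the standard one and is complete, modulo the routine remark that a closed directed walk contains a directed cycle (take the repetition with $j-i$ minimal).
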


\begin{Prop}{\cite[Proposition 3.1.2]{bang-jensen_classes_2018}} \label{prop-acyclic-ordering}
    Let $Q$ be an acyclic quiver.
    Then there exists an acyclic ordering of its vertices.
\end{Prop}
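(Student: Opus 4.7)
The plan is to prove the existence of an acyclic ordering by induction on the number of vertices $n = |Q_0|$, using Proposition~\ref{prop-acyclic-source-sink} as the key input at each step.

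For the base case $n = 1$, the single vertex forms a trivial acyclic ordering, since there are no arrows to violate the defining condition of Definition~\ref{def-acyclic-ordering}. For the inductive step, assume the result holds for every acyclic quiver on at most $n-1$ vertices, and let $Q$ be an acyclic quiver on $n$ vertices. By Proposition~\ref{prop-acyclic-source-sink}, $Q$ has at least one source; pick one and call it $a_1$. Let $Q'$ denote the full subquiver of $Q$ induced by $Q_0 \setminus \{a_1\}$. Since $Q'$ is obtained from $Q$ by deleting a vertex and all its incident arrows, any directed cycle in $Q'$ would also be a directed cycle in $Q$, so $Q'$ is still acyclic and has $n - 1$ vertices.

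By the inductive hypothesis, $Q'$ admits an acyclic ordering $a_2 \prec a_3 \prec \dots \prec a_n$. The plan is to simply prepend $a_1$ to this ordering and verify that $a_1 \prec a_2 \prec \dots \prec a_n$ is acyclic for $Q$. Take any arrow $a_i \to a_j$ in $Q_1$. If both $i, j \geq 2$, then this arrow also lies in $Q'_1$ and the inductive hypothesis gives $a_i \prec a_j$. If $i = 1$, then $a_1 \prec a_j$ holds by construction. The remaining case $j = 1$ is ruled out because $a_1$ was chosen to be a source of $Q$, so no arrow can end at $a_1$.

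There is not much of a genuine obstacle here; the only thing to be careful about is verifying that deleting a source preserves acyclicity (immediate, since subquivers of acyclic quivers are acyclic) and handling the three cases above cleanly. The essential content is simply that Proposition~\ref{prop-acyclic-source-sink} supplies a source that can be stripped off at each stage, which is exactly the mechanism of a topological sort.
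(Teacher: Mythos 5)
Your proof is correct; the paper does not prove this proposition itself but cites it from the literature, and your induction (peel off a source supplied by Proposition~\ref{prop-acyclic-source-sink}, order the rest by the inductive hypothesis, and prepend) is exactly the standard topological-sort argument that the cited reference uses. No gaps.
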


The following is a well-known result, but we include it for completeness' sake.

\begin{Lem} \label{lem-acyclic-source-sink}
    Let $Q$ be an acyclic quiver on $n$ vertices.
    If $a_1 \prec a_2 \prec \dots \prec a_n$ is an acyclic ordering of the vertices of $Q$, then $\bw = [a_1, a_2, \dots, a_n]$ is a source mutation sequence and $\bv = [a_n, a_{n-1}, \dots, a_1]$ is a sink mutation sequence.
    Furthermore, if $\bw_i = [a_1, a_2, \dots, a_{i-1}]$ and $\bv_i = [a_n, a_{n-1}, \dots, a_{i+1}]$, then $\mu_{\bw_i}(Q)$ and $\mu_{\bv_i}(Q)$ are both acyclic for all $1 \leq i \leq n$.
\end{Lem}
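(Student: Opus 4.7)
The plan is to prove the statement for $\bw$ by induction on $i$ and to deduce the statement for $\bv$ by applying that result to the opposite quiver $Q^{\mathrm{op}}$.

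For the induction on $\bw$, I will maintain a single invariant at step $i$: the quiver $\mu_{\bw_i}(Q)$ is acyclic, the vertex $a_i$ is one of its sources, and every arrow of $Q$ with one endpoint in $\{a_1, \ldots, a_{i-1}\}$ and the other in $\{a_i, \ldots, a_n\}$ has been reversed in $\mu_{\bw_i}(Q)$, while arrows inside $\{a_i, \ldots, a_n\}$ are untouched. The base case $i = 1$ is immediate from the acyclic ordering. The pivotal observation for the inductive step is that step (1) of Definition~\ref{def-mutation} is vacuous when the mutated vertex is a source, since there are no length-$2$ paths through a source; so $\mu_{a_i}$ simply reverses the arrows incident to $a_i$ and adds nothing new. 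From this I obtain all three clauses of the invariant at step $i+1$ simultaneously: acyclicity, because any cycle in $\mu_{\bw_{i+1}}(Q)$ would either avoid $a_i$ (and thus be present already in $\mu_{\bw_i}(Q)$) or pass through $a_i$ (which has just become a sink); the bookkeeping, because each arrow between $a_i$ and $a_k$ with $k > i$ has been reversed for the first and only time; and the source property of $a_{i+1}$, because its arrows to $\{a_{i+2}, \ldots, a_n\}$ are as in $Q$ and point outward by the acyclic ordering, while its arrows to $\{a_1, \ldots, a_i\}$ have by the bookkeeping been reversed from arrows originally pointing into $a_{i+1}$.

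The statement for $\bv$ is a formal consequence: $Q^{\mathrm{op}}$ is acyclic with the reversed ordering $a_n \prec \cdots \prec a_1$, sinks in $Q$ are precisely sources in $Q^{\mathrm{op}}$, and mutation commutes with reversing every arrow of a quiver (both steps of Definition~\ref{def-mutation} are manifestly preserved by this involution). Thus the source mutation sequence $[a_n, \ldots, a_1]$ applied to $Q^{\mathrm{op}}$ coincides, stage for stage and up to arrow reversal, with the sink mutation sequence $\bv$ applied to $Q$; so the sink property of each $a_i$ in $\mu_{\bv_i}(Q)$ and the acyclicity of each $\mu_{\bv_i}(Q)$ follow from the corresponding statements for $\bw$ on $Q^{\mathrm{op}}$. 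I do not anticipate a serious obstacle; the only mildly delicate point is verifying that once the arrow between $a_j$ and $a_k$ (with $j < k$) is reversed at step $j$, no later mutation at an intermediate vertex $a_\ell$ with $j < \ell < k$ can alter it again — which holds because step (1) of the mutation procedure is vacuous at every step of a source sequence, so no arrow not incident to the mutated vertex is ever touched.
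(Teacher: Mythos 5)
Your proof is correct and follows essentially the same inductive strategy as the paper's: mutation at a source adds no new arrows, so each $\mu_{\bw_i}(Q)$ stays acyclic with $a_i$ a source. The paper phrases the invariant more compactly as the cyclically shifted acyclic ordering $a_2 \prec \dots \prec a_n \prec a_1$ and dispatches $\bv$ with ``the argument is similar,'' which your opposite-quiver reduction simply makes precise.
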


\begin{proof}
    We prove only that $\bw$ is a source mutation sequence, as the argument for $\bv$ is similar.
    First, note that $\mu_{a_1}(Q)$ is acyclic with $a_2$ a source, as there is no $a_j$ such that $a_j \to a_2$ in $\mu_{a_1}(Q)$.
    Additionally, an acyclic ordering for $\mu_{a_1}(Q)$ is given by $a_2 \prec \dots \prec a_n \prec a_1$.
    We can then repeat the process for all of $\bw$, proving that $\mu_{\bw_i}(Q)$ is acyclic and $\bw$ is a source mutation sequence.
\end{proof}

Given the results listed previously and their relationship with sources and sinks, we now introduce another operation on a set of quivers--the \textit{triangular extension}.

\begin{Def}\label{def-triangular-extension}
    Let $A$ and $B$ be quivers. 
    A \textbf{triangular extension of $A$ and $B$} is a quiver $Q$ with $Q_0 = A_0 \cup B_0$ and $Q_1 = A_1 \cup B_1 \cup E$ where $E$ is a set of arrows satisfying exclusively one of
    $$i \to j \in E \implies i \in A_0, j \in B_0$$
    or 
    $$i \to j \in E \implies i \in B_0, j \in A_0,$$
    i.e., the arrows between $A$ and $B$ all point in the same direction in $Q$.
\end{Def}

If, for example, $A = \{a_1\}$ is the trivial quiver on one vertex and $B$ is any other quiver, then any non-trivial triangular extension $Q$ of $A$ and $B$ will have the vertex $a_1 \in Q_0$ as either a source or a sink of $Q$. 
Moreover, if $a_i \in A_0$ is a source (respectively sink) in $A$, then any triangular extension $Q$ of $A$ and $B$ with additional arrows $E$ from $A \to B$ (respectively $B \to A$) will have $a_i \in Q_0$ as a source (respectively sink).
These ideas will be at the core of our proofs in Section~\ref{sec-proof}.

The next definition is crucial to the definitions of Banff and Louise quivers.

\begin{Def}\label{def-biinfitnite}
    A \textbf{bi-infinite path} in $Q$ is a sequence $(i_a)_{a \in \ZZ}$ of mutable vertices such that $i_a \to i_{a+1}$ is an arrow in $Q$ for each $a \in \ZZ$. 
    A pair of vertices $(i,j)$ is a \textbf{covering pair} if $i \to j$ is an arrow in $Q$ which is not part of any bi-infinite path.
    Note that any arrow that is part of a directed cycle is always part of a bi-infinite path.
 \end{Def}

 We now provide some results on the nature of covering pairs which will be useful to us in Section~\ref{sec-proof}.

\begin{Prop}{\cite[Proposition 5.1]{muller_locally_2013}} \label{prop-acyclic-covering-pair}
    An arrow $i \to j$ is in a bi-infinite path if and only if there is a cycle along a directed path ending at $i$ and a cycle along a directed path beginning at $j$.
    Equivalently, an arrow $i \to j$ is a covering pair if and only if there is no cycle along a directed path ending at $i$ or there is no cycle along a directed path beginning at $j$.
\end{Prop}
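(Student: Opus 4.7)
The plan is to prove the two directions separately, using the finiteness of the vertex set $Q_0$ for the forward direction and an explicit construction for the reverse direction. Note that the second equivalent formulation is just the contrapositive of the first, so it suffices to prove the biconditional as stated.

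For the forward direction, suppose $i \to j$ is part of a bi-infinite path $(i_a)_{a \in \ZZ}$ with $i_0 = i$ and $i_1 = j$. I focus on the backward tail $\ldots, i_{-2}, i_{-1}, i_0 = i$, which is an infinite sequence of vertices drawn from the finite set $Q_0$. By pigeonhole, there exist integers $a < b \leq 0$ with $i_a = i_b$, and the subsequence $i_a \to i_{a+1} \to \cdots \to i_b$ is a directed cycle. Concatenating this cycle with the directed path $i_b = i_a \to i_{b+1} \to \cdots \to i_0 = i$ (or, if $b = 0$, just taking the cycle itself, which ends at $i$) produces a cycle along a directed path ending at $i$. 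Applying the identical argument to the forward tail $i_1 = j, i_2, i_3, \ldots$ yields a cycle along a directed path beginning at $j$.

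For the reverse direction, I construct a bi-infinite path explicitly. Suppose there is a directed cycle $C_1 = (v_0 \to v_1 \to \cdots \to v_m = v_0)$ together with a directed path $v_0 = u_0 \to u_1 \to \cdots \to u_s = i$, and similarly a cycle $C_2 = (w_0 \to w_1 \to \cdots \to w_n = w_0)$ with a directed path $j = t_0 \to t_1 \to \cdots \to t_r = w_0$. I define $(i_a)_{a \in \ZZ}$ so that its positive portion $a \geq 1$ follows $t_0, t_1, \ldots, t_r$ from $j$ into $C_2$ and then cycles around $C_2$ indefinitely, while its non-positive portion $a \leq 0$ ends at $i$ by first cycling backward through $C_1$ infinitely often (with respect to increasing $a$) and then traversing $u_0, u_1, \ldots, u_s$ into $i$. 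By construction, every consecutive pair $i_a \to i_{a+1}$ is an arrow of $Q$, and $i_0 \to i_1$ is precisely the arrow $i \to j$.

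The only mild obstacle is bookkeeping in the reverse direction: I must align the finite paths and cycles into a single $\ZZ$-indexed sequence so that the indices match up at the arrow $i \to j$. This is a purely index-shifting issue and not a substantive mathematical difficulty. With both directions in hand, the contrapositive immediately gives the second statement: $i \to j$ is a covering pair (i.e., not in any bi-infinite path) precisely when either there is no cycle along a directed path ending at $i$, or no cycle along a directed path beginning at $j$.
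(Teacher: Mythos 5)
Your proof is correct. Note that the paper does not actually prove this proposition --- it is quoted verbatim from Muller (Proposition 5.1 of \cite{muller_locally_2013}) --- and your argument (pigeonhole on the backward and forward tails of the bi-infinite path for one direction, and splicing the two cycles, the connecting paths, and the arrow $i \to j$ into a single $\ZZ$-indexed sequence for the converse) is exactly the standard proof given there. The only point worth tightening is that the repeated-vertex segment $i_a \to \cdots \to i_b$ with $i_a = i_b$ is a priori a closed walk rather than a simple cycle; choosing $a < b$ with $b - a$ minimal (or extracting a minimal closed subwalk) makes it a genuine cycle, and the remaining walk from that cycle to $i$ contains a directed path, so the conclusion stands.
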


\begin{Cor} \label{cor-acyclic-covering-pair}
    If $(i,j)$ is a covering pair, then the subquiver induced by the set of all vertices $A$ along a directed path ending at $i$, including $i$, or the subquiver induced by the set of all vertices $B$ along a direct path beginning at $j$, including $j$, are acyclic.
\end{Cor}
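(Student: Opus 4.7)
The plan is to apply Proposition~\ref{prop-acyclic-covering-pair} directly and then finish by a one-line contradiction. Since $(i,j)$ is a covering pair, Proposition~\ref{prop-acyclic-covering-pair} gives us two cases: either no directed cycle has a directed path from it to $i$, or no directed cycle has a directed path from $j$ to it. These two cases are symmetric with respect to reversing arrows, so without loss of generality I would assume the first: no directed cycle of $Q$ is linked by a directed path to $i$. The goal is then to show that the induced subquiver $Q_A$ is acyclic.

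For the contradiction step, suppose $Q_A$ contains a directed cycle $C = (v_1 \to v_2 \to \dots \to v_\ell \to v_1)$. Each vertex $v_m$ lies in $A$, and so by the definition of $A$ there is a directed path in $Q$ from $v_m$ to $i$ (possibly the trivial path if $v_m = i$). Taking any single $v_m$ and concatenating $C$ with such a path to $i$ exhibits exactly the configuration ruled out by the first case of Proposition~\ref{prop-acyclic-covering-pair}, namely a cycle linked by a directed path ending at $i$. This contradiction forces $Q_A$ to be acyclic. The second case is handled identically (after reversing arrows) to show $Q_B$ is acyclic.

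The only mildly subtle point is keeping the quantifiers straight: the conclusion is an ``or'' between two statements about $A$ and $B$, and this is exactly matched by the disjunction in Proposition~\ref{prop-acyclic-covering-pair}. Once that correspondence is set up, the argument is immediate because the definitions of $A$ and $B$ are precisely tailored to package the ``cycle plus path'' witness needed for the contradiction. I do not foresee any real obstacle here; the corollary is essentially a reformulation of Proposition~\ref{prop-acyclic-covering-pair} in terms of induced subquivers.
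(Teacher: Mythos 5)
Your proof is correct and matches the paper's intent: the paper states this corollary without proof, treating it as an immediate consequence of Proposition~\ref{prop-acyclic-covering-pair}, and your contradiction argument (a cycle in $Q_A$ together with a path from one of its vertices to $i$ is exactly the forbidden ``cycle along a directed path ending at $i$'') is the natural way to fill in that deduction.
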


The following lemma comes from the proof of Theorem 3.2 in \cite{bucher_reddening_2020}.

\begin{Lem} \label{lem-covering-pair-triangulation}
    Let $(i,j)$ be a covering pair in a quiver $Q$.
    Additionally, let
    \begin{itemize}
        \item $A$ be the set of vertices $a$ such that there is an oriented path from $a$ to $i$, including $i$;

        \item $B$ be the set of vertices $b$ such that there is an oriented path from $j$ to $b$, including $j$;

        \item $C \supseteq B$ be the set of vertices such that $Q_0 = A \sqcup C$;

        \item $D \supseteq A$ be the set of vertices such that $Q_0 = B \sqcup D$.
    \end{itemize} 
    Then $Q$ is a triangular extension of $A$ and $C$ where every arrow between $A$ and $C$ points $A \to C$, and $Q$ is a triangular extension of $B$ and $D$ where every arrow between $B$ and $D$ points $D \to B$.
\end{Lem}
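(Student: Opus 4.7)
The plan is to verify directly, from the definitions, the two decompositions of $Q$ claimed in the statement. The only nontrivial input is the covering pair hypothesis, which will be used to rule out directed cycles passing through the arrow $i \to j$.

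First, I would check that $A \cap B = \emptyset$, so that $C := Q_0 \setminus A$ contains $B$ and $D := Q_0 \setminus B$ contains $A$, making the decompositions $Q_0 = A \sqcup C = B \sqcup D$ well-defined in the sense of the lemma. If there were a vertex $v \in A \cap B$, then by definition there would be an oriented path $j \to \cdots \to v$ together with an oriented path $v \to \cdots \to i$; concatenating these with the arrow $i \to j$ produces a directed cycle containing $i \to j$. Since any arrow inside a directed cycle lies in a bi-infinite path (as noted in Definition~\ref{def-biinfitnite}), this contradicts $(i,j)$ being a covering pair.

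Next, I would check the directionality of the crossing arrows. Suppose toward contradiction that there is an arrow $c \to a$ with $c \in C$ and $a \in A$. Then there is an oriented path $a \to \cdots \to i$, and prepending the arrow $c \to a$ yields an oriented path $c \to \cdots \to i$; by the defining property of $A$ this forces $c \in A$, contradicting $c \in C = Q_0 \setminus A$. Hence every arrow between $A$ and $C$ points from $A$ into $C$. The argument for the $(B,D)$ decomposition is symmetric: an arrow $b \to d$ with $b \in B$, $d \in D$ would extend a path $j \to \cdots \to b$ to $j \to \cdots \to d$, placing $d$ in $B$ and contradicting $d \in D$.

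Once these two facts are established, the remaining conditions of Definition~\ref{def-triangular-extension} are automatic: every vertex of $Q$ lies in exactly one block of each partition, and every arrow of $Q_1$ either lies inside a single block or is one of the crossing arrows just analyzed. The only real obstacle in the proof is the disjointness check in the first step; the directional arguments are then immediate contrapositives of the definitions of $A$ and $B$.
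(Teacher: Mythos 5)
Your proposal is correct and follows essentially the same route as the paper: the directionality of the crossing arrows is established by the same contrapositive argument (an arrow $c \to a$ would place $c$ in $A$, and symmetrically for $B$ and $D$). The one difference is that you additionally verify $A \cap B = \emptyset$ via the covering pair hypothesis, a point the paper's proof leaves implicit in the lemma's setup; this is a welcome bit of extra care rather than a different approach.
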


\begin{proof}
    Let $a \in A$, $b \in B$, $c \in C$, and $d \in D$.
    Then $c \to a$ would imply that there is a directed path from $c$ to $i$.
    Thus $c \in A$ is a contradiction.
    Hence, either $a \to c$ or $a$ and $c$ are not neighbors.
    A similar argument shows either $d \to b$ or $d$ and $b$ are not neighbors.
    Therefore, the quiver $Q$ is a triangular extension of $A$ and $C$ where every arrow between $A$ and $C$ points from $A \to C$, and $Q$ is a triangular extension of $B$ and $D$ where every arrow between $B$ and $D$ points $D \to B$.
\end{proof}

Finally, we define the classes of quivers that this paper is concerned with.
Each of the classes of Banff, $\BB'$, Louise, and $\LL'$ quivers are defined recursively; a quiver $Q$ is in a class, say $\mathcal{T}$, if specific subquivers of $Q$ are also in the class $\mathcal{T}$.
As far as we know, there are no ``constructive'' definitions of the classes of Banff, $\BB'$, Louise, or $\LL'$ quivers. 
An example of a constructive definition of $\PP'$ is given below in Example~\ref{exa-constructive}.
The existence of a constructive definition of these classes would almost certainly give an answer to OPAC-033 as well as alternative proofs to OPAC-034 and OPAC-035.

\begin{Def}\label{def-banff}
    The class $\BB$ of \textbf{Banff quivers} is the smallest class of quivers such that 
    \begin{itemize}
        \item any acyclic quiver is Banff,
        \item any quiver mutation equivalent to a Banff quiver is Banff, 
        \item and any quiver $Q$ with a covering pair $(i,j)$ where both $Q\setminus \{i\}$ and $Q\setminus \{j\}$ are Banff is a Banff quiver.
    \end{itemize}
 \end{Def}

 \begin{Def}\label{def-banffprime}
    The class $\BB'$ of \textbf{Banff prime quivers} is the smallest class of quivers such that
    \begin{itemize}
        \item any quiver without arrows is in $\BB'$,
        \item any quiver mutation equivalent to a quiver in $\BB'$ is in $\BB'$, 
        \item and any quiver $Q$ with an arrow $i \to j$ where $i$ is a source or $j$ is a sink, and both $Q\setminus \{i\}$ and $Q\setminus \{j\}$ are in $\BB'$ is in $\BB'$.
    \end{itemize}
 \end{Def}

 \begin{Def}\label{def-louise}
    The class $\LL$ of \textbf{Louise quivers} is the smallest class of quivers such that 
    \begin{itemize}
        \item any acyclic quiver is Louise,
        \item any quiver mutation equivalent to a Louise quiver is Louise, 
        \item and any quiver $Q$ with a covering pair $(i,j)$ where $Q\setminus \{i\}$, $Q\setminus \{j\}$, and $Q\setminus \{i,j\}$ are Louise is a Louise quiver.
    \end{itemize}
 \end{Def}

 \begin{Def}\label{def-louiseprime}
    The class $\LL'$ of \textbf{Louise prime quivers} is the smallest class of quivers such that 
    \begin{itemize}
        \item any quiver without arrows is in $\LL'$,
        \item any quiver mutation equivalent to a quiver in $\LL'$ is in $\LL'$, 
        \item and any quiver $Q$ with an arrow $i \to j$ where $i$ is a source or $j$ is a sink, and each of $Q\setminus \{i\}$, $Q\setminus \{j\}$, and $Q\setminus \{i,j\}$ are in $\LL'$ is in $\LL'$.
    \end{itemize}
 \end{Def}

 \begin{Def}\label{def-P}
    The class $\PP$ is the smallest class of quivers such that 
    \begin{itemize}
        \item the trivial quiver with one vertex and no arrows is in $\PP$,
        \item any quiver mutation equivalent to a quiver in $\PP$ is in $\PP$, 
        \item and any triangular extension of two quivers in $\PP$ is in $\PP$.
    \end{itemize}
 \end{Def}

 \begin{Def}\label{def-Pprime}
    The class $\PP'$ is the smallest class of quivers such that 
    \begin{itemize}
        \item the trivial quiver with one vertex and no arrows is in $\PP'$,
        \item any quiver mutation equivalent to a quiver in $\PP'$ is in $\PP'$, 
        \item and any triangular extension of a quiver in $\PP'$ with the trivial quiver is in $\PP'$.
    \end{itemize}
 \end{Def}

\begin{Exa}\label{exa-constructive}
    This is an example of an alternative and ``constructive'' definition for the class of $\PP'$ quivers. This gives a gradation of this class by the number of vertices of each quiver:
    \begin{enumerate}
        \item Start with the trivial quiver on one vertex, this is the only quiver with one vertex in $\PP'$
        \item To form the $(n+1)^{st}$ level of $\PP'$, take the entirety of the $n^{th}$ level, extended triangularly with the one vertex quiver in all possible ways, and find all quivers mutation-equivalent to the resulting quivers.
    \end{enumerate}
    Finding constructive definitions for Banff, $\BB'$, Louise, and $\LL'$ would almost surely give a proof or counter-example to OPAC-033.
\end{Exa}

\begin{Rmk} \label{rmk-subclasses}
    We naturally get that $\LL' \subseteq \LL$, $\BB' \subseteq \BB$, and $\LL \subseteq \BB$.
    However, there is strict containment $\PP' \subset \PP$ \cite[Remark 4.21]{ladkani_cluster_2013}.
\end{Rmk}

Finally, we state a proposition of Bucher and Machacek that allows us to prove OPAC-035 as a corollary of our main theorem.

\begin{Prop}{\cite[Proposition 3.4]{bucher_reddening_2020}} \label{prop-banff-p-containment}
    If a quiver is in $\BB'$, then it is in $\PP'$.
\end{Prop}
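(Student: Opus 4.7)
The plan is to show that the class $\PP'$ itself satisfies all three closure conditions appearing in the recursive definition of $\BB'$. Since $\BB'$ is by definition the \emph{smallest} class satisfying those conditions, this will immediately give $\BB' \subseteq \PP'$.

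First, I would check that every arrow-free quiver lies in $\PP'$, by induction on the number of vertices $n$. The base case $n=1$ is the trivial quiver, which is in $\PP'$ by definition. For $n>1$, an arrow-free quiver $Q$ on $n$ vertices is the disjoint union of an arrow-free quiver $Q'$ on $n-1$ vertices (in $\PP'$ by the inductive hypothesis) with the trivial quiver on one vertex, viewed as a triangular extension with empty edge set $E$; hence $Q \in \PP'$. Second, closure of $\PP'$ under mutation equivalence is built directly into its definition, so the mutation condition is automatic.

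Third, suppose $Q$ has an arrow $i \to j$ with $i$ a source of $Q$ (the sink case is symmetric via the dual of Definition~\ref{def-Pprime}), and suppose both $Q\setminus\{i\}$ and $Q\setminus\{j\}$ lie in $\PP'$; in fact only the former is needed. Because $i$ is a source, every arrow incident to $i$ points out of $i$, so every arrow between $\{i\}$ and $Q\setminus\{i\}$ points from $\{i\}$ to $Q\setminus\{i\}$. Thus $Q$ is literally a triangular extension of $Q\setminus\{i\} \in \PP'$ with the trivial quiver $\{i\}$, and the third defining rule of $\PP'$ places $Q$ in $\PP'$. These three verifications exhaust the defining clauses of $\BB'$, so $\BB' \subseteq \PP'$.

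There is no real obstacle in this proof: the key observation is that a source (or sink) of $Q$ is exactly the "trivial quiver side" of a triangular extension of $Q\setminus\{i\}$ (resp.\ $Q\setminus\{j\}$) with a single vertex, which is precisely the operation allowed by $\PP'$. The arrow-free base case needs a brief induction, but the heart of the argument is this direct identification of sources/sinks with triangular extensions by a single vertex, which is why the condition $Q\setminus\{j\} \in \BB'$ never even needs to be invoked.
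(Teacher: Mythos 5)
The paper does not actually prove this statement; it is imported verbatim from Bucher--Machacek \cite[Proposition 3.4]{bucher_reddening_2020}, so there is no in-paper proof to compare against. Your argument is correct and is essentially the standard one: since $\BB'$ is defined as the smallest class closed under its three rules, it suffices to check that $\PP'$ satisfies each rule, and the only substantive check is that a source $i$ (resp.\ sink $j$) exhibits $Q$ as a triangular extension of the trivial quiver with $Q\setminus\{i\}$ (resp.\ $Q\setminus\{j\}$), exactly as you say. The one point worth stating explicitly is in your base case: you need the disjoint union (edge set $E=\emptyset$) to count as a triangular extension, and a pedantic reading of Definition~\ref{def-triangular-extension} (``satisfying exclusively one of'') could be taken to exclude $E=\emptyset$, since both implications then hold vacuously; the intended reading clearly permits it (otherwise even the two-vertex arrow-free quiver would fail to lie in $\PP'$ and the proposition would be false), but a sentence acknowledging this convention would make the proof airtight.
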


\section{Answering Two Problems And A Louise Lemma}\label{sec-proof}

\begin{Lem} \label{lem-triangular-extension-source-sink}
    Let $Q$ be a triangular extension of two quivers $A$ and $B$ such that every arrow between $A$ and $B$ points $A \to B$.
    If $\bw = [w_1, w_2, \dots, w_p]$ is a source mutation sequence of distinct vertices of $A$, then $\bw$ is a source mutation sequence of $Q$.
    Similarly, if $\bv = [v_1, v_2, \dots, v_q]$ is a sink mutation sequence of distinct vertices of $B$, then $\bv$ is a sink mutation sequence of $Q$
\end{Lem}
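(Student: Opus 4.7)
The plan is to induct on the length $p$ of the source mutation sequence $\bw$, while maintaining a structural invariant that tracks how the arrows between $A$ and $B$ evolve after each mutation. The base case $p = 0$ is vacuous, and the sink statement for $\bv$ will follow by a symmetric argument.

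The central observation is that mutation at a source reduces to simply reversing every arrow incident to that vertex. Indeed, since a source has no incoming arrows, no directed $2$-path passes through it, so Step $(1)$ of Definition~\ref{def-mutation} introduces no new arrows and Step $(3)$ deletes no $2$-cycles. In particular, mutation at a source preserves every arrow not incident to it, and hence preserves the induced subquiver on the remaining vertices.

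With this in hand, the invariant I would carry at step $i$ is the following: in $\mu_{\bw_i}(Q)$, the subquiver induced on $A$ equals $\mu_{\bw_i}(A)$, the subquiver induced on $B$ equals $B$, and every arrow between a vertex of $A \setminus \{w_1, \ldots, w_{i-1}\}$ and a vertex of $B$ still points from $A$ to $B$ (while arrows touching an already-mutated $w_1, \ldots, w_{i-1}$ may now point from $B$ to $A$). Granting this invariant, the inductive step is immediate: since $\bw$ is a source mutation sequence of $A$, the vertex $w_i$ is a source of $\mu_{\bw_i}(A)$, so it has no incoming arrow within $A$; since the $w_j$ are distinct, $w_i$ lies in $A \setminus \{w_1, \ldots, w_{i-1}\}$, so by the invariant it has no incoming arrow from $B$ either. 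Thus $w_i$ is a source of $\mu_{\bw_i}(Q)$, and mutating at this source merely reverses its incident arrows, which is exactly what is needed to propagate the invariant to step $i+1$.

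The sink statement for $\bv$ is handled by the symmetric argument: swap the roles of $A$ and $B$, replace \textbf{source} by \textbf{sink} throughout, and instead maintain the invariant that every arrow between $B \setminus \{v_1, \ldots, v_{i-1}\}$ and $A$ still points from $A$ to $B$. I do not anticipate a serious obstacle; the only subtle point is the essential use of distinctness of the mutation sequence, without which a previously mutated vertex of $A$ would already have acquired arrows from $B$ and so could fail to be a source of $Q$ even if it were again a source of the current subquiver on $A$.
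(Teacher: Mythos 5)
Your proposal is correct and follows essentially the same route as the paper: an induction along the mutation sequence maintaining the invariant that the arrows between the not-yet-mutated vertices of $A$ and all of $B$ still point $A \to B$, justified by the observation that mutation at a source only reverses its incident arrows and so leaves the rest of the quiver untouched. Your version merely spells out the invariant in slightly more detail (tracking the induced subquivers on $A$ and $B$ explicitly), which the paper's proof leaves implicit.
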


\begin{proof}
    We prove only the case for the source mutation $\bw$, as the proof for the sink mutation $\bv$ is analogous. 
    Suppose that $w_i$ is a source in $\mu_{\bw_i}(Q)$ for $\bw_i = [w_1, w_2, \dots, w_{i-1}]$ and $\mu_{\bw_i}(Q) \setminus \{w_1,w_2,\dots, w_{i-1}\}$ is a triangular extension of $A \setminus \{w_1,w_2,\dots, w_{i-1}\}$ and $B$, where every arrow points from $A \setminus \{w_1,w_2,\dots, w_{i-1}\}$ to $B$.
    Then $w_{i+1}$ is a source in $\mu_{\bw_{i+1}}(Q)$, as mutation at $w_i$ does not affect any arrows between $A \setminus \{w_1,w_2,\dots, w_{i-1}, w_i\}$ and $B$.
    Additionally, the quiver $\mu_{\bw_{i+1}}(Q) \setminus \{w_1,w_2,\dots, w_{i-1}, w_i\}$ is triangular extension of $A \setminus \{w_1,w_2,\dots, w_{i-1}, w_i\}$ to $B$, where every arrow points from $A \setminus \{w_1,w_2,\dots, w_{i-1}, w_i\}$ to $B$.
    As $w_1$ must be a source in $Q$, induction proves that $\bw = [w_1, w_2, \dots, w_p]$ is a source mutation sequence of $Q$ as long as it is a source mutation sequence of distinct vertices of $A$.
\end{proof}

\begin{Lem} \label{lem-covering-pair-mutation}
    Let $Q$ be a quiver with a covering pair $(i,j)$.
    Then there exists a source or sink mutation sequence $\bw$ avoiding $i$ and $j$ such that $P = \mu_\bw(Q)$ has $i$ as a source or $j$ as a sink, respectively.
    In particular, this forces $Q \setminus \{i\}$, $Q \setminus \{j\}$, and $Q \setminus \{i,j\}$ to be respectively mutation-equivalent to $P \setminus \{i\}$, $P \setminus \{j\}$, and $P \setminus \{i,j\}$.
\end{Lem}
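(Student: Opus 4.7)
The plan is to exploit the triangular-extension structure around a covering pair and then reduce making $i$ a source (or $j$ a sink) to applying a source (or sink) mutation sequence inside the relevant acyclic piece. By Corollary~\ref{cor-acyclic-covering-pair}, either the set $A$ of vertices on a directed path ending at $i$, or the set $B$ of vertices on a directed path starting at $j$, induces an acyclic subquiver; these two cases are symmetric and yield the ``source at $i$'' and ``sink at $j$'' conclusions respectively, so I will carry out only the case that $A$ is acyclic.

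By Lemma~\ref{lem-covering-pair-triangulation}, $Q$ is a triangular extension of $A$ and $C = Q_0 \setminus A$ with every connecting arrow oriented $A \to C$. A directed cycle through the arrow $i \to j$ would put $j$ on a directed path ending at $i$ and hence into $A$, breaking acyclicity; therefore $j \in C$, and any mutation sequence contained in $A \setminus \{i\}$ automatically avoids both $i$ and $j$. Because every vertex of $A$ admits a directed path to $i$ that stays inside $A$, the vertex $i$ is the unique maximum in any acyclic ordering of $A$ provided by Proposition~\ref{prop-acyclic-ordering}. Picking such an ordering $a_1 \prec a_2 \prec \cdots \prec a_k = i$ and setting $\bw = [a_1, \dots, a_{k-1}]$, Lemma~\ref{lem-acyclic-source-sink} (applied to the full-length sequence $[a_1, \dots, a_k]$) shows that $\bw$ is a source mutation sequence of the induced subquiver on $A$ and that $i$ is a source of the resulting mutated subquiver. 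Lemma~\ref{lem-triangular-extension-source-sink} lifts $\bw$ to a source mutation sequence of $Q$, and since the only arrows into $i$ in $Q$ come from $A$, the source property of $i$ transfers to $P = \mu_\bw(Q)$. The sink case for $j$ is identical up to reversal, using an acyclic ordering of $B$ with $j$ minimal and the triangular extension $D \to B$.

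For the ``in particular'' clause, the key observation is that mutation at any $v \notin \{i,j\}$ commutes with restriction to the complement of any $S \subseteq \{i,j\}$: new arrows produced by $\mu_v$ come from $2$-paths $a \to v \to b$, and any such arrow incident to $S$ is discarded by the restriction, exactly as if the vertices of $S$ had been removed before mutating. Iterating along $\bw$ gives $P \setminus S = \mu_\bw(Q \setminus S)$ for each $S \in \{\{i\},\{j\},\{i,j\}\}$, producing the claimed mutation-equivalences. I expect the main obstacle to be verifying that $i$ really does sit at the top (and $j$ at the bottom) of the acyclic ordering; once this is justified from the downward/upward-closure definitions of $A$ and $B$, the remaining work is straightforward assembly via the two preceding lemmas.
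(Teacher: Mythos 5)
Your proposal is correct and follows essentially the same route as the paper: the case split via Corollary~\ref{cor-acyclic-covering-pair}, the triangular extension from Lemma~\ref{lem-covering-pair-triangulation}, an acyclic ordering of $A$ turned into a source mutation sequence via Lemma~\ref{lem-acyclic-source-sink}, and the lift to $Q$ via Lemma~\ref{lem-triangular-extension-source-sink}. Your only additions are the (correct) observation that $i$ must be maximal in any acyclic ordering of $A$ --- the paper instead just truncates the sequence at wherever $i$ appears --- and an explicit justification that mutation away from $\{i,j\}$ commutes with restriction, which the paper leaves implicit.
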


\begin{proof}
    If $i$ is a source or $j$ a sink in $Q$, then setting $\bw = []$ gives the desired result.
    We may assume that $i$ is not a source and $j$ is not a sink.
    As $(i,j)$ is a covering pair in $Q$, we know that either there is no cycle along a directed path ending at $i$ or there is no cycle along a directed path beginning at $j$.

    \underline{Case 1: no cycle along a directed path ending at $i$}

    Let $A$ be the set of all vertices $a$ such that there exists a directed path from $a$ to $i$, including $i$ itself.
    Then $A$ is acyclic by Corollary~\ref{cor-acyclic-covering-pair}.
    Let $C$ be the complement of $A$.
    Then $Q$ is a triangular extension of $A$ and $C$ where every arrow between $A$ and $C$ points $A \to C$ by Lemma~\ref{lem-covering-pair-triangulation}.

    As $A$ is acyclic, there exists an acyclic ordering $a_1 \prec a_2 \prec \dots \prec a_m$ of $A$, where $m = |A|$, by Proposition~\ref{prop-acyclic-ordering}.
    Then $\bv = [a_1,a_{2},\dots,a_m]$ is a source mutation sequence of $A$ by Lemma~\ref{lem-acyclic-source-sink}.
    Thus $\bv$ is a source mutation sequence of $Q$ by Lemma~\ref{lem-triangular-extension-source-sink}.
    If $i = a_q$, then $i$ must be a source in $P = \mu_{\bw}(Q)$ for $\bw = [a_1,a_2,\dots,a_{q-1}]$.
    Additionally, as $j \notin A$ and $i \notin \bw$, we know that we have not mutated at $i$ or $j$.
    Thus $Q \setminus \{i\}$, $Q \setminus \{j\}$, and $Q \setminus \{i,j\}$ after mutation along $\bw$ respectively equal $P \setminus \{i\}$, $P \setminus \{j\}$, and $P \setminus \{i,j\}$.
    
    \underline{Case 2: no cycle along a directed path beginning at $j$}

    Repeat the above argument using sinks and the sets $B$ and $D$ described in Lemma~\ref{lem-covering-pair-triangulation}.
    The result then holds for both cases.
\end{proof}

We now arrive at our main result: a proof of OPAC-034 which shows that the Banff quivers and the $\BB'$ quivers are the same class.

\begin{Thm}[OPAC-034] \label{thm-opac-034}
    Every Banff quiver is mutation-equivalent to a quiver in $\BB'$, i.e., $\BB = \BB'$.
\end{Thm}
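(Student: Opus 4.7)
The plan is to exploit the observation from Remark~\ref{rmk-subclasses} that $\BB' \subseteq \BB$, so the content lies entirely in the reverse containment $\BB \subseteq \BB'$. Since $\BB$ is the \emph{smallest} class of quivers satisfying the three closure properties of Definition~\ref{def-banff}, it suffices to verify that $\BB'$ itself satisfies those three properties; minimality of $\BB$ will then force $\BB \subseteq \BB'$.

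Closure under mutation equivalence is immediate from the definition of $\BB'$. For the acyclic closure, I would argue by induction on the number of vertices that every acyclic quiver lies in $\BB'$. The base case handles any quiver with no arrows, which is in $\BB'$ by definition. For the inductive step, given an acyclic quiver $Q$ with at least one arrow, walking backwards along any arrow terminates (by acyclicity) at a source $i$ with some outgoing arrow $i \to j$; both $Q \setminus \{i\}$ and $Q \setminus \{j\}$ are acyclic on strictly fewer vertices, hence lie in $\BB'$ by the inductive hypothesis, and the third clause of Definition~\ref{def-banffprime}, applied with $i$ as a source, puts $Q$ in $\BB'$.

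The core step is the covering-pair closure: if $(i,j)$ is a covering pair in $Q$ and both $Q \setminus \{i\}, Q \setminus \{j\} \in \BB'$, then $Q \in \BB'$. Here Lemma~\ref{lem-covering-pair-mutation} is the key tool: it produces a source or sink mutation sequence $\bw$ avoiding $i$ and $j$ such that in $P := \mu_{\bw}(Q)$ either $i$ is a source or $j$ is a sink, and moreover $P \setminus \{i\}$ and $P \setminus \{j\}$ are mutation-equivalent to $Q \setminus \{i\}$ and $Q \setminus \{j\}$, hence lie in $\BB'$ by mutation-equivalence closure. The delicate point, and the main anticipated obstacle, is to confirm that the arrow $i \to j$ survives from $Q$ to $P$: because $\bw$ consists entirely of source or sink mutations at vertices distinct from $i$ and $j$, step (1) of mutation never fires, so no new arrows are created between unmutated vertices and in particular the arrow $i \to j$ is preserved unchanged. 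Applying the third clause of Definition~\ref{def-banffprime} to $P$ now gives $P \in \BB'$, and mutation closure yields $Q \in \BB'$, completing the verification and hence the theorem.
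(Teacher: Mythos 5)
Your proof is correct and hinges on the same key tool as the paper's --- Lemma~\ref{lem-covering-pair-mutation} --- but the surrounding framework is genuinely different. The paper argues by induction on the number of vertices: assuming every Banff quiver on $n$ vertices lies in $\BB'$, it takes a Banff quiver $Q$ on $n+1$ vertices, reduces (via mutation-closure) to the case where $Q$ has a covering pair $(i,j)$ with $Q\setminus\{i\}$ and $Q\setminus\{j\}$ Banff, applies Lemma~\ref{lem-covering-pair-mutation} to make $i$ a source or $j$ a sink in $P = \mu_\bw(Q)$, and gets $P\setminus\{i\}, P\setminus\{j\} \in \BB'$ from the inductive hypothesis. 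You instead invoke the minimality of $\BB$ in Definition~\ref{def-banff} and check that $\BB'$ is closed under the three generating operations of $\BB$; a vertex-count induction survives only in your verification that every acyclic quiver lies in $\BB'$. Your route buys two things. First, it makes the acyclic base case explicit (source $i$ with an outgoing arrow $i \to j$, both deletions acyclic on fewer vertices), whereas the paper leaves this implicit in its reduction to the covering-pair case; second, it sidesteps the slightly informal unwinding needed to justify ``we can assume $Q$ has a covering pair'' inside a vertex-count induction --- formally that reduction is an induction on the derivation of $Q \in \BB$, not on $|Q_0|$, and your minimality argument absorbs it cleanly. You are also right to flag, and correct to resolve, the survival of the arrow $i \to j$ in $P$: since $\bw$ avoids $i$ and $j$ and each mutation is at a source or sink of the current quiver, step (1) of mutation never fires and only arrows incident to the mutated vertex are touched, so $i \to j$ persists and the third clause of Definition~\ref{def-banffprime} genuinely applies to $P$. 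The paper uses this fact silently.
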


\begin{proof}
    We argue by induction on the number of vertices in a quiver.
    The single vertex quiver is acyclic, and thus it is in both $\BB$ and $\BB'$, completing the base case.
    Suppose then that every Banff quiver on $n$ vertices is contained in $\BB'$.
    Let $Q$ be a Banff quiver on $n+1$ vertices.
    As both $\BB$ and $\BB'$ are closed under mutation, we can assume that $Q$ has a covering pair $(i,j)$ such that $Q \setminus \{i\}$ and $Q \setminus \{j\}$ are Banff.
    If either $i$ is a source or $j$ is a sink, then $Q \in \BB'$ by the inductive hypothesis.
    We can then assume that $i$ is not a source and $j$ is not a sink and split into the two cases given by Proposition~\ref{prop-acyclic-covering-pair}.

    Then Lemma~\ref{lem-covering-pair-mutation} gives us a source or sink mutation-sequence $\bw$ such that either $i$ is a source or $j$ is a sink in $P = \mu_\bw(Q)$.
    Additionally, we get that $P \setminus \{i\}$ and $P \setminus \{j\}$ are mutation-equivalent to $Q \setminus \{i\}$ and $Q \setminus \{j\}$.
    Both $P \setminus \{i\}$ and $P \setminus \{j\}$ are then Banff and in $\BB'$ by the inductive hypothesis.
    This implies that $P \in \BB'$, forcing $Q \in \BB'$.
    As $Q$ was an arbitrary member of $\BB$, we have $\BB = \BB'$ by induction.
\end{proof}

\begin{Cor}[OPAC-035] \label{cor-opac-035}
    Every Banff quiver is contained in class $\PP$.
\end{Cor}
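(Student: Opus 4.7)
The plan is to obtain OPAC-035 as an immediate consequence by chaining together inclusions already established in the paper, so that essentially no new work is required. The heavy lifting has been done in Theorem~\ref{thm-opac-034}, which identifies $\BB$ with $\BB'$; from there, the corollary will follow by invoking two prior results about how $\BB'$ sits inside the $\PP$-hierarchy.

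First, I would recall Theorem~\ref{thm-opac-034}, which gives $\BB = \BB'$. Next, I would apply Proposition~\ref{prop-banff-p-containment}, due to Bucher and Machacek, which asserts that every quiver in $\BB'$ lies in $\PP'$. Finally, I would cite Remark~\ref{rmk-subclasses} (itself drawn from Ladkani's work) to note that $\PP' \subset \PP$. Composing these three facts yields the chain
\[
\BB \;=\; \BB' \;\subseteq\; \PP' \;\subseteq\; \PP,
\]
so if $Q$ is an arbitrary Banff quiver, then $Q \in \PP$, which is precisely what OPAC-035 asks for.

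Because all three ingredients are already in hand, there is no genuine obstacle here; the only thing to be careful about is ensuring the reader sees exactly which prior results are being used and in what order. In particular, the only step that is not entirely classical is $\BB = \BB'$, and that has just been proved. I would therefore keep the write-up to a few sentences and make the citation chain explicit, since the value of this corollary is in how cleanly it falls out of the main theorem rather than in any new technical content.
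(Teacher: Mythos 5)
Your proposal is correct and follows exactly the same route as the paper: it chains Theorem~\ref{thm-opac-034} ($\BB = \BB'$), Proposition~\ref{prop-banff-p-containment} ($\BB' \subseteq \PP'$), and the containment $\PP' \subset \PP$ from Remark~\ref{rmk-subclasses}. No differences worth noting.
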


\begin{proof}
    As $\BB' \subseteq \PP'$ from Proposition~\ref{prop-banff-p-containment},  we have
    $$\BB = \BB' \subseteq \PP' \subset \PP$$
    from Theorem~\ref{thm-opac-034}.
\end{proof}

Finally, we make partial progress on OPAC-033 by proving a similar result to Theorem~\ref{thm-opac-034} for Louise quivers.

\begin{Cor} \label{cor-louise-louise-prime}
    Every Louise quiver is mutation-equivalent to a quiver in $\LL'$, i.e., $\LL= \LL'$.
\end{Cor}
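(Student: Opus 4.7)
The plan is to mirror the proof of Theorem~\ref{thm-opac-034} almost verbatim, exploiting the fact that Lemma~\ref{lem-covering-pair-mutation} was stated strong enough to simultaneously track all three subquivers $Q\setminus\{i\}$, $Q\setminus\{j\}$, and $Q\setminus\{i,j\}$. The inductive scaffolding is identical; only the bookkeeping of the extra subquiver changes.

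First, I would induct on the number of vertices of $Q$. The single vertex quiver has no arrows, so it lies in both $\LL$ and $\LL'$, handling the base case. For the inductive step, assume every Louise quiver on $n$ vertices is in $\LL'$, and let $Q$ be a Louise quiver on $n+1$ vertices. Because both $\LL$ and $\LL'$ are closed under mutation, I may mutate $Q$ freely, so I may assume $Q$ is produced by the recursive clause in Definition~\ref{def-louise}: there is a covering pair $(i,j)$ with each of $Q\setminus\{i\}$, $Q\setminus\{j\}$, and $Q\setminus\{i,j\}$ Louise. If $i$ is already a source or $j$ already a sink, the inductive hypothesis applied to these three subquivers places them in $\LL'$, and then Definition~\ref{def-louiseprime} puts $Q$ into $\LL'$ directly.

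Otherwise, Proposition~\ref{prop-acyclic-covering-pair} applies, and I invoke Lemma~\ref{lem-covering-pair-mutation} to obtain a source or sink mutation sequence $\bw$, avoiding $i$ and $j$, such that $P = \mu_\bw(Q)$ has either $i$ as a source or $j$ as a sink. The same lemma guarantees that $P\setminus\{i\}$, $P\setminus\{j\}$, and $P\setminus\{i,j\}$ are mutation-equivalent to $Q\setminus\{i\}$, $Q\setminus\{j\}$, and $Q\setminus\{i,j\}$ respectively. Since $\LL$ is closed under mutation, all three of $P\setminus\{i\}$, $P\setminus\{j\}$, and $P\setminus\{i,j\}$ are Louise, and so by the inductive hypothesis they all lie in $\LL'$. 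The recursive clause of Definition~\ref{def-louiseprime} then places $P$ in $\LL'$, and closure of $\LL'$ under mutation yields $Q \in \LL'$.

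The proof presents no real obstacle beyond what was already done for OPAC-034; the only thing to double-check is that Lemma~\ref{lem-covering-pair-mutation}'s conclusion about $Q\setminus\{i,j\}$ is strong enough, which it is because $\bw$ never mutates at $i$ or $j$. In particular the extra condition in the Louise recursion (controlling $Q\setminus\{i,j\}$) costs us nothing, since the source/sink mutation sequence $\bw$ that achieves the reduction is the same one used in the Banff case. Hence $\LL \subseteq \LL'$, and combined with the trivial containment $\LL' \subseteq \LL$ from Remark~\ref{rmk-subclasses}, we conclude $\LL = \LL'$.
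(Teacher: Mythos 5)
Your proposal is correct and follows essentially the same route as the paper: both repeat the inductive argument of Theorem~\ref{thm-opac-034}, using Lemma~\ref{lem-covering-pair-mutation} to make $i$ a source or $j$ a sink while tracking the third subquiver $Q\setminus\{i,j\}$, which comes for free since the mutation sequence avoids $i$ and $j$. Your write-up is in fact somewhat more explicit about the base case and the closure-under-mutation steps than the paper's terse version.
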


\begin{proof}
    Repeat the argument of Theorem~\ref{thm-opac-034} with a Louise quiver $Q$ that has a covering pair $(i,j)$ such that $Q \setminus \{i\}$, $Q \setminus \{j\}$, and $Q \setminus \{i,j\}$ are Louise.
    Then Lemma~\ref{lem-covering-pair-mutation} gives us a source or sink mutation-sequence $\bw$ such that either $i$ is a source or $j$ is a sink in $P = \mu_\bw(Q)$.
    Hence $Q \setminus \{i\}$, $Q \setminus \{j\}$, and $Q \setminus \{i,j\}$ are respectively mutation-equivalent to $P \setminus \{i\}$, $P \setminus \{j\}$, and $P \setminus \{i,j\}$.
    This forces $Q \in \LL'$, proving that $\LL = \LL'$ by induction.
\end{proof}

\begin{Cor} \label{cor-subclasses-all}
    $\LL' = \LL \subseteq \BB = \BB' \subseteq \PP' \subset \PP$
\end{Cor}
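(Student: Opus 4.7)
The plan is to assemble this chain from results already established in the paper, so there is essentially no new content to produce; the proof is a one-line catalogue of citations rather than an argument. I would present it as a single displayed chain, annotating each inclusion or equality with the result that justifies it, reading the chain from left to right.

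First, the equality $\LL' = \LL$ is exactly the statement of Corollary~\ref{cor-louise-louise-prime}, so it can be invoked directly. Next, the containment $\LL \subseteq \BB$ is recorded in Remark~\ref{rmk-subclasses}, where it follows immediately from comparing Definitions~\ref{def-louise} and \ref{def-banff}: every acyclic quiver is in both classes, both are closed under mutation, and the recursive clause for Louise quivers (which additionally requires $Q \setminus \{i,j\}$ to be Louise) is strictly stronger than the Banff clause, so any Louise quiver is Banff.

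Then $\BB = \BB'$ is our main Theorem~\ref{thm-opac-034}, and $\BB' \subseteq \PP'$ is Proposition~\ref{prop-banff-p-containment} of Bucher and Machacek. Finally, the strict containment $\PP' \subset \PP$ is Remark~\ref{rmk-subclasses}, justified in the reference \cite[Remark 4.21]{ladkani_cluster_2013}.

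There is no obstacle: the entire proof consists of writing out
\[
\LL' \;=\; \LL \;\subseteq\; \BB \;=\; \BB' \;\subseteq\; \PP' \;\subset\; \PP,
\]
with the equalities and inclusions labeled by Corollary~\ref{cor-louise-louise-prime}, Remark~\ref{rmk-subclasses}, Theorem~\ref{thm-opac-034}, Proposition~\ref{prop-banff-p-containment}, and Remark~\ref{rmk-subclasses} respectively. The only minor point worth mentioning is that the rightmost containment is strict, so the chain as a whole yields strict containment of $\LL'$ in $\PP$; everything else could in principle be equality, and determining which of the inner inclusions is strict is precisely the content of the remaining open problem OPAC-033.
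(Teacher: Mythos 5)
Your proof is correct and is exactly the assembly the paper intends: the corollary is stated without its own proof precisely because it follows by concatenating Corollary~\ref{cor-louise-louise-prime}, Remark~\ref{rmk-subclasses}, Theorem~\ref{thm-opac-034}, and Proposition~\ref{prop-banff-p-containment}, which is what you do. Your closing observation that only the rightmost containment is known to be strict, and that the strictness of $\LL \subseteq \BB$ is the content of OPAC-033, matches the paper's discussion as well.
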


The consequence of Theorem~\ref{thm-opac-034} and Corollary~\ref{cor-louise-louise-prime} in light of the remaining open problem OPAC-033 is that we only need to consider source/sink covering pairs. 
Due to Lemma~\ref{lem-covering-pair-mutation}, if there exists a covering pair $(i,j)$ in a quiver $Q$ such that $Q \setminus \{i\}$ and $Q \setminus \{j\}$ are Louise but $Q \setminus \{i,j\}$ is not Banff, then we can assume that either $i$ is a source or $j$ is a sink.

A possible approach to solving OPAC-033 would be to give constructive definitions for the Banff and Louise quiver classes in the style of Example~\ref{exa-constructive}. 
While this would make the proof of OPAC-033 almost trivial (as mentioned previously), we currently have little evidence that this is a feasible approach.

\begin{Rmk}
Note that being a member of any of the four classes $\BB$, $\LL$, $\PP'$, or $\PP$ is not a hereditary property (a property preserved by restricting to any full subquiver).
For example, take the quiver given by 
\[Q =\begin{tikzcd}
& 1 \\
& 2 \\
3 & & 4 \\
& 5\\
& 6
\arrow[from=1-2, to=2-2, "2"]
\arrow[from=2-2, to=3-1]
\arrow[from=2-2, to=3-3]
\arrow[from=3-1, to=1-2]
\arrow[from=3-1, to=3-3]
\arrow[from=3-3, to=1-2]
\arrow[from=3-3, to=4-2]
\arrow[from=4-2, to=3-1]
\arrow[from=5-2, to=4-2]
\end{tikzcd}\]
This quiver is Louise \cite[Remark 11.15]{muller_locally_2013}.
However, the subquiver on the vertices 1,2,3, and 4 is not mutation-equivalent to any triangular extension, since its mutation class consists entirely of isomorphic copies of itself.
As such, it is a counterexample of any of the four classes being hereditary.  
It also gives an example of a covering pair that fails for testing both Banff and Louise, as (6,5) is a covering pair with $Q \setminus \{5\}$ not in any of the four classes.
This gives a way to show $\BB \neq \LL$ if $\BB = \LL$ implies that membership in either class is a hereditary property.
\end{Rmk}

\begin{Prop}\label{prop-size-counterexample}
Let $Q$ be a Banff quiver on 5 or fewer vertices.
Then $Q$ is also Louise.
Consequently, any counterexample to $\BB = \LL$ must have 6 or more vertices.
\end{Prop}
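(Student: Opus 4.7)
The plan is a strong induction on $n = |Q_0|$. For $n \leq 2$, any quiver is automatically acyclic (loops and $2$-cycles being forbidden) and thus Louise. For $n = 3$, every non-acyclic quiver is a $3$-cycle and so has every arrow on the cycle itself, hence in a bi-infinite path; no $3$-vertex quiver has a covering pair, and both $\BB$ and $\LL$ collapse on $3$ vertices to the class of quivers that are acyclic or mutation-equivalent to an acyclic quiver. In particular, $\BB = \LL$ on $\leq 3$ vertices.

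For the inductive step at $n \in \{4, 5\}$, I would apply Theorem~\ref{thm-opac-034} to replace $Q$ by a mutation-equivalent $P \in \BB'$. If $P$ has no arrows, it is acyclic and Louise. Otherwise, $P$ carries a source/sink arrow $i \to j$ with $P \setminus \{i\}, P \setminus \{j\} \in \BB'$. By the inductive hypothesis, $P \setminus \{i\}$ and $P \setminus \{j\}$ are Louise. Since $i$ is a source or $j$ is a sink, $(i,j)$ is a covering pair by Proposition~\ref{prop-acyclic-covering-pair}, so to conclude $P \in \LL$ via this covering pair it suffices to verify that $P \setminus \{i, j\}$ is Louise. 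For $n \leq 4$ this is automatic, since $P \setminus \{i, j\}$ has at most $2$ vertices.

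The heart of the argument is the $n = 5$ case, where $T := P \setminus \{i, j\}$ is a $3$-vertex quiver and we must show it is Banff (equivalently Louise, by the $n=3$ base case). Assume without loss of generality that $i$ is a source in $P$. By Lemma~\ref{lem-covering-pair-triangulation}, $P \setminus \{j\}$ is a triangular extension of $\{i\}$ and $T$ with $i$ a source. I would argue by contradiction, supposing $T$ is a bad $3$-cycle, meaning a $3$-cycle whose entire mutation class contains no acyclic quiver. The critical observation is that mutation of $P \setminus \{j\}$ at any vertex $v \in T$ induces exactly the $3$-vertex mutation $\mu_v$ on the induced subquiver $T$: the $2$-paths $i \to v \to w$ contribute only to arrows between $i$ and $T$, not to $T$-internal arrows, and a mutation at $i$ leaves the $T$-subquiver fixed. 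Hence the induced subquiver on the $T$-vertices remains a bad $3$-cycle in every mutation of $P \setminus \{j\}$.

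The main obstacle is then showing that no mutation $R$ of $P \setminus \{j\}$ can satisfy the Banff recursion. Arrows internal to the $T$-subquiver all lie on the bad $3$-cycle and thus in a bi-infinite path, so they cannot serve as covering pairs. Arrows involving $i$ fall into two cases: if $i$ remains a source or sink of $R$, the only covering pairs are of the form $(i, v)$ or $(v, i)$, but any Banff recursion using them would require $R \setminus \{i\}$, which is a bad $3$-cycle, to be Banff, contradicting our assumption; if $i$ has both incoming and outgoing arrows, the $3$-cycle structure of the $T$-subquiver provides a bi-infinite path through $i$, so no arrow touching $i$ is a covering pair either. Since no mutation of $P \setminus \{j\}$ is acyclic or admits a valid covering pair, $P \setminus \{j\}$ cannot be Banff, giving the contradiction. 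Thus $T$ is Banff and Louise, completing the induction. The trickiest step will be justifying the bi-infinite path analysis uniformly across all mutations when $i$ acquires complex connections to $T$.
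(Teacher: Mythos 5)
Your overall reduction coincides with the paper's: after handling $n\le 3$, everything hinges on showing that $Q\setminus\{i,j\}$ is Louise, which is automatic for $n\le 4$ and, for $n=5$, amounts to showing that a certain $3$-vertex full subquiver of a Banff quiver is mutation-acyclic. The paper settles that last point in one line by invoking reddening sequences: every Banff quiver admits a reddening sequence, this property is inherited by full subquivers, and a quiver on at most $3$ vertices admitting a reddening sequence is mutation-acyclic, hence Louise. Your bare-hands substitute for this step contains a genuine gap.

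The ``critical observation'' --- that the induced subquiver on the $T$-vertices stays a bad $3$-cycle throughout the mutation class of $P\setminus\{j\}$ --- is not justified and is false as stated. Restriction to a full subquiver does commute with mutation at a vertex \emph{of that subquiver}, and mutation at $i$ fixes the $T$-subquiver only while $i$ is a source or a sink. But after mutating inside $T$, the vertex $i$ can acquire both incoming and outgoing arrows to $T$, and then mutating at $i$ creates new arrows between vertices of $T$ via $2$-paths $u\to i\to v$ with $u,v\in T$; the induced subquiver on $T$ need not remain a $3$-cycle at all, let alone one in the original bad mutation class. Your entire covering-pair analysis of an arbitrary $R$ mutation-equivalent to $P\setminus\{j\}$ rests on this invariance, so the contradiction is not established. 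More fundamentally, certifying that a $4$-vertex quiver is \emph{not} Banff requires excluding acyclicity and valid covering pairs across its entire, possibly infinite, mutation class, and your case split (which you yourself flag as the ``trickiest step'') does not control that class. To repair the argument you would need exactly the kind of mutation-invariant obstruction the paper imports, namely the reddening-sequence results of \cite{bucher_reddening_2020} and Muller; with those in hand, $Q\setminus\{i,j\}$ is mutation-acyclic directly and the detour through $\BB'$ and Theorem~\ref{thm-opac-034} is unnecessary.
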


\begin{proof}
    All quivers on 3 or fewer vertices that are Banff are also trivially Louise.
    We may then assume that $Q$ has 4 or 5 vertices.
    This forces $Q \setminus\{i,j\}$ to be mutation-acyclic for any pair of vertices $\{i,j\}$, as $Q \setminus\{i,j\}$ must admit a reddening sequence and has 3 or fewer vertices.
    Hence, the subquiver $Q \setminus \{i,j\}$ is both Banff and Louise. 
    Let $(i,j)$ be any covering pair such that $Q \setminus \{i\}$ and $Q \setminus \{j\}$ are both Banff.
    If $Q$ has 4 vertices, then $Q \setminus \{i\}$ and $Q \setminus \{j\}$ are Banff quivers on 3 vertices, making them both Louise. 
    Thus $Q$ is Louise.
    If $Q$ has 5 vertices, then $Q \setminus \{i\}$ and $Q \setminus \{j\}$ are Banff quivers on 4 vertices, which we have just shown to be Louise.
    Again, this proves that $Q$ is Louise.
    
    However, this argument cannot be extended to quivers on 6 vertices, as $Q \setminus \{i,j\}$ is a quiver on 4 vertices and is no longer guaranteed to be mutation-acyclic.
    Thus, the quiver $Q \setminus \{i,j\}$ is not guaranteed to belong to $\BB$, $\LL$, or $\PP$.
\end{proof}

\printbibliography

\end{document}